\documentclass[12pt]{amsart}
\usepackage{fullpage, amsmath,amscd,amssymb,amssymb,xy}
\xyoption{all}

\theoremstyle{plain}
\newtheorem{theorem}{Theorem}[section]
\newtheorem{prop}[theorem]{Proposition}
\newtheorem{lemma}[theorem]{Lemma}

\newtheorem{claim}[theorem]{Claim}

\theoremstyle{definition}
\newtheorem{defn}{Definition}
\newtheorem{remark}{Remark}

\newcommand{\mat}[4]{
    \left[\begin{matrix}
      #1 & #2 \\
      #3 & #4
    \end{matrix}\right]
}
\newcommand{\Mat}[9]{
    \left[\begin{matrix}
      #1 & #2 & #3 \\
      #4 & #5 & #6 \\
      #7 & #8 & #9
    \end{matrix}\right]
}
\newcommand{\kk}{\mathbf{k}}
\newcommand{\m}{\mathfrak{m}}

\newcommand{\M}{\text{M}}
\newcommand{\GL}{\text{GL}}
\newcommand{\SL}{\text{SL}}

\newcommand{\Z}{\mathbb{Z}}

\newcommand{\jj}{\jmath}
\newcommand{\ii}{\imath}
\newcommand{\len}{\ell}

\DeclareMathOperator{\tr}{tr}

\title{Similarity classes of $3\times 3$ matrices\\ over a local principal ideal ring}

\author{Nir Avni}

\author{Uri Onn}
\thanks{The second author was supported by the Edmund Landau Minerva
Center for Research in Mathematical Analysis and Related Areas,
sponsored by the Minerva Foundation (Germany) and by ISF and BSF (US-Israel)}

\author{Amritanshu Prasad}

\author{Leonid Vaserstein}

\keywords{matrices, similarity, local ring}
\subjclass[2000]{15A21, 15A30, 15A54}
\begin{document}
\maketitle \markboth{\textsc{NIR AVNI, URI ONN, AMRITANSHU PRASAD
AND LEONID VASERSTEIN}}{\textsc{SIMILARITY CLASSES OVER A LOCAL
RING}}

\begin{abstract} In this paper similarity classes of three by three matrices over a local principal ideal commutative ring are analyzed. When the residue field is finite, a generating function for the number of similarity classes for all finite quotients of the ring is computed explicitly.
\end{abstract}

\section{Introduction}
\label{sec:intro}

\subsection{Overview}
\label{sec:overview}

Let $A$ be a local principal ideal commutative ring. Let $\m$ denote
its maximal ideal and let $\kk=A/\m$ be the residue field. Let
$\len \in \mathbb{N} \cup \{\infty\}$ denote the length of $A$, that is, the
smallest positive integer for which $\m^\len=0$. Denote by $\M_n(A)$ and $\GL_n(A)$, the ring of matrices over $A$ and its group of units, respectively.

\begin{defn}
  Two matrices $\alpha$ and $\alpha'$ in $\M_n(A)$ are called \emph{similar} if there
  exists a matrix $X\in \GL_n(A)$ such that $X\alpha=\alpha' X$.
  The similarity classes of invertible matrices are the \emph{conjugacy classes} in $\GL_n(A)$.
\end{defn}

The classification problem of similarity classes in $n \times n$ matrices over rings has been considered by several authors and is considered to be a highly nontrivial quest, unless the ring in hand happens to be a field. For example, in \cite[\S4]{MR0498881} it is proved that already for $A=\Z/p^2\Z$, the classification of similarity classes in $\M_{4n}(A)$ contains the matrix pair problem in $\M_n(\Z/p\Z)$. The aim of this paper is to classify similarity classes in $\M_3(A)$ and $\GL_3(A)$.

\medskip

In order to put things into perspective we shall now take a short excursion in some known results on similarity classes. The similarity classes of matrices with entries in a field have been well understood in terms of their \emph{rational canonical forms} for a long time and are described, for example, by Dickson in \cite[Chapter V]{MR0105380}.

\smallskip

Over rings, only partial results are available;
In \cite{MR0220757}, Davis has shown using Hensel's method, that two matrices in $\M_n(\mathbb{Z}/p^\len\mathbb{Z})$ which are zeroes of a common polynomial whose reduction modulo $p$ has no repeated roots are similar if and only if their reductions modulo $p$ are similar.
In a similar vein, using an extension of the Sylow theorems (attributed to P.~Hall), Pomfret \cite{MR0309963} has shown that for a finite local ring, invertible matrices whose orders are coprime to the characteristic of the residue field are similar if and only if their images in the residue field are similar.

\smallskip

In \cite{MR579942}, Grunewald has given an algorithm for determining whether two matrices in $\GL_n(\mathbb{Q})$ are conjugate by an element of $\GL_n(\mathbb{Z})$.
For the special case where $n=3$, Appelgate and Onishi \cite{MR656422} have given a simpler algorithm to solve the same problem.
Given any two matrices $\alpha$ and $\alpha'$ in $\SL_n(\mathbb{Z}_p)$, Appelgate and Onishi \cite{MR681827} have given an explicit method to determine a positive integer $\len$ such that $\alpha$ and $\alpha'$ are conjugate in $\SL_n(\mathbb{Z}_p)$ if and only if they are conjugate in $\SL_n(\mathbb{Z}/p^\len\mathbb{Z})$, thereby reducing the conjugacy problem in the uncountable group $\SL_n(\mathbb{Z}_p)$ to a finite one.

\smallskip

In \cite{MR731899}, Nechaev has classified the similarity classes in the case $n=3$ and $\len = 2$. Close to the present article is \cite{MR714872}, where Pizarro has given a set of  representatives of the similarity classes in $\M_3(A)$ modulo scalar shift, when $A$ is a finite quotient of a complete discrete valuation ring. These representatives, however, do not lend themselves to the explicit enumeration of similarity classes when $A$ is finite, which is one of the main goals of this paper. Such explicit classification and enumeration has two implications in representation theory. The first is that together with the orbit method for $p$-adic Lie groups \cite{MR0579176}, the aforementioned classification is an important ingredient in computing the representation zeta function of $\SL_3(\mathcal{O})$, where $\mathcal{O}$ is the ring of integers of a $p$-adic field, see \cite{AO07}. The second is a positive indication that the isomorphism type of the group algebra $\mathbb{C}\GL_n(A)$ whenever $A$ is finite, depends only on $\kk$, as conjectured in \cite{ranktwo}, since we prove that $\dim_{\mathbb{C}} Z\left( \mathbb{C}\GL_n(A) \right)=|\text{Sim}(\GL_n(A))|$ depends only on $\kk$ for $n \le 3$.

\subsection{Some notation}
 Throughout we fix a uniformizing element $\pi \in \m$. For each $a\in A$
there is a unique integer $0\leq v(a)\leq \len$, called the valuation
of $a$, such that $a$ can be written as the product of $\pi^{v(a)}$
and a unit. For $1 \le \ii \le \len$ we write $A_\ii$ for the quotient $A/\m^\ii$ and $A_\ii^{\times}$ for its units. We fix a section $\kk=A_1 \hookrightarrow A$ which maps zero to zero with image $K_1 \subset A$. We can then define compatible sections $A_{\ii}\hookrightarrow
A$ for all $1\leq \ii < \len$, identifying $A_\ii$ with $K_{\ii}=\{\sum_{j=0}^{\ii-1}a_j \pi^j~|~a_j \in K_1\} \subset A$ as sets. We also have canonical identifications $A_{\len - j} \to \pi^jA$. The main examples of $A$ that we have in mind are the rings of integers of local fields and their finite
length quotients.

A shorthand notation is used for some commonly occurring matrices.
The identity matrix is denoted by $I$. The symbol $x^{ij}$ is used
to denote the matrix $I+xE^{ij}$, where $E^{ij}$ is the elementary
matrix with all entries zero, except for a \lq$1$\rq~in the $i^{\text{th}}$
row and $j^{\text{th}}$ column. Given a polynomial
$f(x)=x^n-a_{n-1}x^{n-1}-\cdots-a_1x-a_0$, its {\em companion matrix} is
the matrix
\begin{equation*}
  C_f=C(a_0,\ldots,a_{n-1})=\left[
    \begin{array}{ccccc}
      0 & 1 & 0 & \cdots & 0\\
      0 & 0 & 1 & \cdots & 0\\
      \vdots & \vdots & \vdots & \ddots & \vdots\\
      0 & 0 & 0 & \cdots & 1\\
      a_0 & a_1 & a_2 & \cdots & a_{n-1}
    \end{array}
  \right]. \qquad \qquad \qquad \qquad \qquad
\end{equation*}

The characteristic polynomial of the companion matrix $C_f$ is $f(x)$. Also, recall that a companion matrix represents a cyclic endomorphism, i.e. there exist $v \in A^n$ such that $\{C^i v~|~0 \le i <n \}$ is a basis for $A^n$. A block diagonal matrix is
denoted by
\begin{equation*}
  D(d_1,\ldots,d_n)
\end{equation*}
where $d_1,\ldots,d_n$ are the diagonal entries, which may
themselves be square matrices or scalars. Finally, another special
matrix that will come up often in this paper is
\begin{equation*}
  \quad E(m,a,b,c,d)=\Mat 0{\pi^m}0001abc +dI, \quad m \in \mathbb{N}, a,b,c,d \in A.
\end{equation*}
The special case $E(\len,0,0,c,d)$ will play an important role and will be denoted $J(c,d)$ for simplicity.

\subsection{Acknowledgements} The first and second authors thank Alex Lubotzky for supporting this research. The authors thank the referee for suggesting some improvements to the first draft of this paper.


\section{A baby version: $2 \times 2$ matrices}
In lack of an adequate reference with complete results (partial results can be found in \cite{MR0499009}), and since it serves as a solid basis for the reasoning in $n=3$, we now describe the similarity classes in the case $n=2$.

\subsection{Representatives}
Similarity classes in $\M_n(A)$ and $\GL_n(A)$ for $n=2$ are considerably easier to tackle than for $n>2$. The underlying reason is the following dichotomy:

\medskip

\centerline{\em Any element in $\M_2(\kk)$ is either scalar or cyclic.}

\medskip

\begin{lemma}\label{canonical22} Any element $\alpha \in \M_2(A)$ can be written in the form
\[
\alpha=dI+\pi^\jj \beta
\]
with $\jj \in \{0,\ldots,\len\}$ maximal such that $\alpha$ is congruent to a scalar matrix modulo $\m^\jj$, with unique $d \in K_\jj$ and unique $\beta \in \M_2(A_{\len-\jj})$ cyclic.
\end{lemma}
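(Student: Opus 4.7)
The plan is to use the stated dichotomy (every matrix in $\M_2(\kk)$ is either scalar or cyclic) to extract a cyclic ``leading part'' after peeling off as much scalar as possible. First, I define $\jj \in \{0,\ldots,\len\}$ to be the maximal integer such that $\alpha \equiv dI \pmod{\m^\jj}$ for some $d \in A$; this maximum exists since $\jj = 0$ always works and is bounded by $\len$ (where $\m^\len = 0$ forces everything to be scalar trivially). Uniqueness of $\jj$ is built into the definition. Lift this scalar residue via the fixed section to obtain $d \in K_\jj$; since any two elements of $K_\jj$ differing by an element of $\m^\jj$ must coincide by construction of the section, $d$ is unique. Then $\alpha - dI \in \pi^\jj \M_2(A)$, and since multiplication by $\pi^\jj$ induces the canonical identification $A_{\len-\jj} \to \pi^\jj A$ mentioned in the notation section, there is a unique $\beta \in \M_2(A_{\len-\jj})$ with $\alpha = dI + \pi^\jj \beta$. (The edge case $\jj = \len$ corresponds to $\alpha$ itself being scalar, where $\beta$ lives in a trivial ring and the statement is vacuous.)

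Next I would show that $\beta$ is cyclic. Consider the reduction $\bar\beta \in \M_2(\kk)$. If $\bar\beta$ were scalar, say $\bar\beta = \bar e I$, then lifting $\bar e$ to $e \in A$ would give $\alpha \equiv (d + \pi^\jj e)I \pmod{\m^{\jj+1}}$, contradicting the maximality of $\jj$. Hence by the dichotomy, $\bar\beta$ is cyclic over $\kk$: there exists $\bar v \in \kk^2$ such that $\{\bar v, \bar\beta \bar v\}$ is a $\kk$-basis, i.e., $\det[\bar v \mid \bar\beta \bar v] \neq 0$. Lift $\bar v$ to $v \in A_{\len-\jj}^2$; then $\det[v \mid \beta v]$ reduces to this nonzero element of $\kk$, hence is a unit in the local ring $A_{\len-\jj}$. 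Therefore $\{v, \beta v\}$ is a free basis of $A_{\len-\jj}^2$, which is exactly the statement that $\beta$ is cyclic.

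The main obstacle is the lift of cyclicity from $\kk$ to $A_{\len-\jj}$, but this is handled by the standard determinantal criterion together with the fact that units lift uniquely out of a local ring---a mild Nakayama-type argument. Everything else is bookkeeping dictated by the definition of $\jj$ and the uniqueness of the chosen sections $K_\ii \hookrightarrow A$. Two subtleties worth watching for in the write-up are: (i) ensuring that maximality of $\jj$ really forces $\bar\beta$ to be non-scalar (needed because the dichotomy is only over $\kk$, not over $A_{\len-\jj}$); and (ii) keeping the domains of $d$ (in $K_\jj$) and $\beta$ (in $\M_2(A_{\len-\jj})$) straight so that the claimed uniqueness is genuine and not merely uniqueness up to some congruence.
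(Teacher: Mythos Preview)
Your proposal is correct and follows essentially the same route as the paper: the paper also reduces to showing that $\beta$ is cyclic, observes that maximality of $\jj$ forces $\bar\beta$ to be non-scalar and hence cyclic over $\kk$, and then invokes Nakayama's Lemma to lift cyclicity to $A_{\len-\jj}$. Your determinantal argument (lifting a cyclic vector because its determinant reduces to a unit) is exactly a spelled-out form of that Nakayama step, and your more careful bookkeeping on the uniqueness of $d$ and $\beta$ simply fills in what the paper calls ``straightforward''.
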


\begin{proof} The only fact that is not straightforward is that $\beta$ is cyclic. But the maximality of $\jj$ implies that $\beta$ is not a scalar modulo $\m$, and hence its image $\bar{\beta} \in \M_2(\kk)$ is cyclic. Using Nakayama's Lemma $\beta$ must be a cyclic as well.
\end{proof}
Clearly, $\alpha=dI+\pi^\jj\beta$ is similar to $\alpha'=d'I+\pi^\jj\beta'$ if and only if $d= d'$ and $\beta$ is similar to $\beta'$. Every cyclic matrix is conjugate to a companion matrix. Moreover, two
companion matrices are conjugate if and only if the polynomials
defining them are equal. We thus have
\begin{theorem}\label{theorem:2}
  For any $\alpha\in \M_2(A)$, let $\jj$, $d$ and $\beta$ be as above.
  Then $\alpha$ is similar to the matrix
  \begin{equation*}
    dI+\pi^\jj C(-\det(\beta), \tr(\beta)).
  \end{equation*}
  Thus $\jj\in \{0,\ldots,\len\}$, $d\in K_\jj$ and $\tr(\beta),\det(\beta)\in A_{\len-\jj}$ completely determine the similarity class of $\alpha$ in $\M_2(A)$. The similarity classes in $\GL_2(A)$ are represented by the subset of these elements such that $d \in A^{\times}$ for $\jj \ge 1$, or if $\jj=0$ then $\det(\beta) \in A^{\times}$.
\end{theorem}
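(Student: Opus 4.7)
The plan is to exploit Lemma~\ref{canonical22} and reduce everything to the classification of cyclic matrices via their companion forms. The observation recorded just before the theorem, namely that $dI+\pi^\jj\beta$ and $d'I+\pi^\jj\beta'$ are similar if and only if $d=d'$ and $\beta,\beta'$ are similar in $\M_2(A_{\len-\jj})$, means one needs only (a) bring $\beta$ into its companion form inside $\M_2(A_{\len-\jj})$ and lift the conjugation to $\GL_2(A)$, and (b) show that the quadruple $(\jj,d,\tr(\beta),\det(\beta))$ is a complete system of invariants.

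For existence, I would start from $\alpha=dI+\pi^\jj\beta$ given by Lemma~\ref{canonical22}. Since $\beta\in\M_2(A_{\len-\jj})$ is cyclic, there is a vector $v$ such that $\{v,\beta v\}$ is a basis of $A_{\len-\jj}^2$; letting $\bar{Y}\in\GL_2(A_{\len-\jj})$ be the corresponding change-of-basis matrix, $\bar{Y}^{-1}\beta \bar{Y}$ is the companion matrix of the characteristic polynomial $x^2-\tr(\beta)x+\det(\beta)$, i.e.\ $C(-\det(\beta),\tr(\beta))$. I then pick any lift $Y\in\GL_2(A)$ of $\bar{Y}$ (a lift of a unit is a unit). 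Conjugation by $Y^{-1}$ fixes $dI$ and sends $\pi^\jj\beta$ to $\pi^\jj C(-\det(\beta),\tr(\beta))$: any error in the conjugation of $\beta$ lies in $\m^{\len-\jj}\M_2(A)$, which is killed after multiplication by $\pi^\jj$ since $\pi^\jj\m^{\len-\jj}=\m^\len=0$.

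For uniqueness, $\jj$ is obviously a similarity invariant, being the maximal integer for which $\alpha$ is scalar modulo $\m^\jj$; hence so is $d\in K_\jj$, read off from $\alpha\bmod\m^\jj$. Once $\jj$ and $d$ are fixed, I would argue that the similarity class of $\beta$ in $\M_2(A_{\len-\jj})$ is a genuine invariant of $\alpha$: if $X\in\GL_2(A)$ intertwines $\alpha=dI+\pi^\jj\beta$ with $\alpha'=dI+\pi^\jj\beta'$, then $\pi^\jj(X\beta-\beta'X)=0$, so the reduction $\bar{X}\in\GL_2(A_{\len-\jj})$ conjugates $\beta$ to $\beta'$; conversely any such $\bar{X}$ lifts to $\GL_2(A)$ and, by the same $\pi^\jj\m^{\len-\jj}=0$ argument, conjugates $\alpha$ to $\alpha'$. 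For a cyclic matrix over any ring this similarity class is determined by the characteristic polynomial, hence by the pair $(\tr(\beta),\det(\beta))\in A_{\len-\jj}^2$, finishing the $\M_2(A)$ part.

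The $\GL_2(A)$ statement is a direct corollary: $\alpha$ is a unit if and only if its image in $\M_2(\kk)$ is a unit; this image is $\bar{d}I$ when $\jj\ge1$, forcing $d\in A^\times$, and equals $\bar{\beta}$ when $\jj=0$, forcing $\det(\beta)\in A^\times$. The main (mild) obstacle throughout is the bookkeeping around lifting matrices from $\M_2(A_{\len-\jj})$ to $\M_2(A)$ and checking that the intertwining equation survives after multiplication by $\pi^\jj$; everything else is either the standard cyclic-vector argument or the definition of $\jj$.
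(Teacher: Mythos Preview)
Your proposal is correct and follows essentially the same approach as the paper: decompose via Lemma~\ref{canonical22}, reduce to the similarity class of the cyclic matrix $\beta$, and use that cyclic matrices are determined up to conjugacy by their characteristic polynomial. The paper's own argument is a two-line remark preceding the theorem (``Every cyclic matrix is conjugate to a companion matrix. Moreover, two companion matrices are conjugate if and only if the polynomials defining them are equal''), so you have simply filled in the bookkeeping around lifting conjugations from $\GL_2(A_{\len-\jj})$ to $\GL_2(A)$ that the paper leaves implicit.
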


\begin{remark}
In \cite{MR731899}, Nechaev has introduced the notion of a \emph{canonically determined} matrix.
This is a matrix $\alpha$ whose similarity class is completely determined by its Fitting invariants. These are the ideals in $A[x]$ generated by the $m\times m$ minors of $xI-\alpha$ (thought of as a matrix in $\M_n(A[x])$), for $m=1,\ldots,n$.
Nechaev conjectured, and proved it in certain cases, that a matrix is canonically determined if and only if all these ideals are principal ideals. For $n=2$, the ideal generated by the entries of $xI - \alpha$ is $A(x-d)+A\pi^{\jj}$ where $d$ and $\jj$ are as in Lemma \ref{canonical22}.
When $\len = \infty$, the characteristic polynomial of $\alpha$ determines
the characteristic polynomial of $\beta$. By Theorem \ref{theorem:2}, it follows that
the Fitting invariants determine the similarity class of $\alpha$,
which refutes an extension of his conjecture to the case $\len = \infty$. Representing $A$ as a factor ring of a discrete valuation domain, Kurakin \cite{MR2278884} has refined the Fitting invariants.

\end{remark}

\subsection{Enumeration}

Assume now that the residue field $\kk$ is finite of cardinality $q$. We wish to count the number of similarity classes in $\M_2(A_\ii)$ and $\GL_2(A_\ii)$. Although one can count them directly using Theorem \ref{theorem:2}, we shall use a recursive approach which will be very useful later on. Let $\eta:\M_n(A_{\ii+1}) \to M_n(A_\ii)$ denote the reduction map. Then, for any similarity class $\Omega \subset \M_n(A_\ii)$, the inverse image $\eta^{-1}(\Omega)$ is a disjoint union of similarity classes in $\M_n(A_{\ii+1})$. For $n=2$ the branching rules are the following. Let $a_\ii$ denote the number of similarity classes which are scalar matrices and let $b_\ii$ denote the number of the other similarity classes. We wish to establish a recursive relation between $(a_{\ii+1},b_{\ii+1})$ and $(a_{\ii},b_{\ii})$. Scalar matrices in $\M_2(A_{\ii+1})$ necessarily lie over scalar matrices $\M_2(A_\ii)$, hence $a_{\ii+1}=q a_{\ii}$. The non-scalar similarity classes in $\M_2(A_{\ii+1})$ can come from two sources; they can either lie over a scalar matrix in $\M_2(A_\ii)$, in which case they are $q^2a_{\ii}$ in number, or, they can lie over a non-scalar similarity class in $\M_2(A_\ii)$, in which case they are $q^2b_{\ii}$ in number (both assertions follow from Theorem \ref{theorem:2}). We therefore have

\[
\left[\begin{matrix} a_{\ii+1} \\ b_{\ii+1} \end{matrix}\right]= \left[\begin{matrix} q & 0 \\ q^2 & q^2 \end{matrix}\right] \left[\begin{matrix} a_{\ii} \\ b_{\ii} \end{matrix}\right].
\]
The initial values are
\[
v_{\M_2}=\left[\begin{matrix} a_1 \\ b_1 \end{matrix}\right]=\left[\begin{matrix} q \\ q^2 \end{matrix}\right] \qquad \text{and} \qquad v_{\GL_2}=\left[\begin{matrix} a_1 \\ b_1 \end{matrix}\right]=\left[\begin{matrix} q-1 \\ q^2-q \end{matrix}\right].
\]
Setting $T = \big[\begin{smallmatrix} q & 0 \\ q^2 & q^2 \end{smallmatrix}\big]$, we get that
\[
T^{\ii}=\left[\begin{matrix} q^\ii & 0 \\q^{\ii+1}\frac{q^\ii-1}{q-1} & q^{2\ii} \end{matrix}\right],
\]
hence
\[
\begin{split}
|\text{Sim}\left(\M_2(A_\ii)\right)|&=\epsilon T^{\ii-1}v_{\M_2}= (q^{2\ii+1}-q^\ii)/(q-1)\\
|\text{Sim}\left(\GL_2(A_\ii)\right)|&=\epsilon T^{\ii-1}v_{\GL_2}=q^{2\ii}-q^{\ii-1}
\end{split}
\]
where $\epsilon$ is the row vector $(1,1)$.

\section{Representatives for similarity classes of $3\times 3$ matrices}
\label{sec:3}

\subsection{Similarity classes over a field}
\label{sec:field}

The similarity classes over a field are given by their \emph{rational canonical forms} \cite[Chapter 7, Theorem 5]{MR0276251}:
\begin{theorem}[Similarity classes in $M_3(\kk)$]\label{theorem:field}
  Every matrix in $M_3(\kk)$ is similar to exactly one of the following:
  \begin{enumerate}
  \item\label{type:scalar} A scalar matrix $aI$, with $a\in \kk$ .

  \item\label{type:decompnoncyc} A matrix of the form $D(a,b,b)$, with $a,b\in \kk$ distinct.

  \item\label{type:indecompnoncyc} A matrix of the form
  \[
  \Mat a000a100a, \quad a\in \kk.
  \]

  \item\label{type:cyclic} A companion matrix of the form
  \[
  \Mat 010001abc, \quad a, b, c \in \kk.
  \]

  \end{enumerate}
\end{theorem}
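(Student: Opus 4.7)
The plan is to deduce the classification from the structure theorem for finitely generated modules over the principal ideal domain $\kk[x]$. Viewing $\kk^3$ as a $\kk[x]$-module with $x$ acting via $\alpha$, I would decompose it as $\bigoplus_{i=1}^{k} \kk[x]/(f_i)$ where $f_1 \mid f_2 \mid \cdots \mid f_k$ are the invariant factors with $\sum_i \deg f_i = 3$. Two matrices are similar iff their associated $\kk[x]$-modules are isomorphic, and the invariant factors are a complete isomorphism invariant, so uniqueness is automatic once one representative is chosen per invariant-factor sequence.

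Next I would enumerate the sequences $(f_1,\ldots,f_k)$ with $\deg f_i \geq 1$ summing to $3$. There are four possibilities: (a) $k=1$ and $\deg f_1 = 3$; (b) $k=2$ with $f_1 = x-a$ and $f_2 = (x-a)(x-b)$ for some $b \neq a$; (c) $k=2$ with $f_1 = x-a$ and $f_2 = (x-a)^2$; and (d) $k=3$ with $f_1=f_2=f_3 = x-a$. These match types \ref{type:cyclic}, \ref{type:decompnoncyc}, \ref{type:indecompnoncyc}, and \ref{type:scalar} respectively.

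The remaining step is to produce the stated normal form in each case by picking a suitable basis on each cyclic summand. Case (d) gives $\alpha = aI$ outright. In case (a) a cyclic vector realizes $\alpha$ as the companion matrix of $f_1 = x^3 - cx^2 - bx - a$, which is type \ref{type:cyclic}. In case (c), on the summand $\kk[x]/(x-a)^2$ the basis $\{v, (x-a)v\}$ makes $x$ act as $\mat{a}{1}{0}{a}$; combined with the scalar block from $\kk[x]/(x-a)$, ordered with the scalar block first, this is type \ref{type:indecompnoncyc}. In case (b), the Chinese Remainder Theorem gives $\kk[x]/((x-a)(x-b)) \cong \kk[x]/(x-a) \oplus \kk[x]/(x-b)$, so the module becomes $\kk[x]/(x-a)^{\oplus 2} \oplus \kk[x]/(x-b)$; picking eigenvectors yields a diagonal form with eigenvalues $a,a,b$ which, after permuting the basis and renaming parameters, is $D(a',b',b')$ with $a' \neq b'$, type \ref{type:decompnoncyc}.

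No step poses a real obstacle; the argument is classical. The only point requiring care is that the four normal forms are not presented uniformly in the statement (scalar, diagonal, Jordan-like, and companion), so each invariant-factor configuration must be realized with its own basis choice on each cyclic summand rather than through a single canonical form.
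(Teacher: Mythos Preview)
Your proposal is correct and follows essentially the same route as the paper: the paper does not give an independent proof but simply cites the rational canonical form theorem from Hoffman--Kunze, which is precisely the invariant-factor decomposition of the $\kk[x]$-module $\kk^3$ that you spell out. Your case analysis of the possible invariant-factor sequences summing to degree $3$ and the corresponding basis choices is the standard unpacking of that citation.
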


\subsection{Some reductions}
\label{sec:reductions}
To begin with, we assign some invariants to similarity classes.
\begin{prop}
  \label{prop:central}
For any $\alpha\in \M_3(A)$ let $\jj\geq 0$ be the largest integer for
which $\alpha$ is congruent to a scalar matrix modulo $\m^\jj$. Then
$\alpha$ can be written, in a unique manner, as
$\alpha=dI+\pi^\jj \beta$, where $d\in K_\jj$ and $\beta \in
\M_3(A_{\len-\jj})$ is a matrix that is not congruent to a scalar matrix
modulo $\m$. Moreover, two such matrices
$\alpha_1=d_1I+\pi^\jj \beta_1$ and $\alpha_2=d_2I+\pi^\jj \beta_2$ are
similar if and only if $d_1=d_2$ and $\beta_1$ is similar to
$\beta_2$.
\end{prop}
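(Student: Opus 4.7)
The plan is to mimic the proof of Lemma \ref{canonical22} while being careful with the bookkeeping between $A$ and $A_{\len-\jj}$.

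\textbf{Existence and uniqueness of the decomposition.} By maximality of $\jj$, there exists some $a \in A$ with $\alpha \equiv aI \pmod{\m^\jj}$. Because $K_\jj$ is a set of coset representatives for $A/\m^\jj$, there is a unique $d \in K_\jj$ with $\alpha \equiv dI \pmod{\m^\jj}$, and uniqueness of this $d$ is forced by the choice of section. The difference $\alpha - dI$ lies in $\pi^\jj \M_3(A)$, and via the canonical identification $\pi^\jj A \cong A_{\len-\jj}$ from the notation section it corresponds to a unique $\beta \in \M_3(A_{\len-\jj})$. The maximality of $\jj$ is exactly the statement that $\beta$ is not congruent to a scalar modulo $\m$ (otherwise we could absorb another factor of $\pi$ into the scalar part and increase $\jj$).

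\textbf{Similar matrices have the same $\jj$ and $d$.} The condition ``$\alpha$ is congruent to a scalar matrix modulo $\m^\jj$'' is obviously preserved by conjugation (scalars commute with any $X \in \GL_3(A)$), so $\jj$ is a similarity invariant. Moreover, if $X\alpha_1 X^{-1} = \alpha_2$ and $\alpha_1 \equiv d_1 I \pmod{\m^\jj}$, then $\alpha_2 \equiv X(d_1 I)X^{-1} = d_1 I \pmod{\m^\jj}$, so by the uniqueness already shown, $d_1 = d_2$.

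\textbf{From similarity of $\alpha_i$ to similarity of $\beta_i$.} Once $d_1 = d_2 = d$, an equation $X\alpha_1 = \alpha_2 X$ becomes $X\pi^\jj\tilde\beta_1 = \pi^\jj\tilde\beta_2 X$ in $\M_3(A)$, where $\tilde\beta_i$ is any lift of $\beta_i$ to $\M_3(A)$. Multiplication by $\pi^\jj$ identifies $\M_3(A_{\len-\jj})$ with $\pi^\jj \M_3(A)$, so reducing modulo $\m^{\len-\jj}$ and applying this identification yields $\bar X \beta_1 = \beta_2 \bar X$ in $\M_3(A_{\len-\jj})$ with $\bar X \in \GL_3(A_{\len-\jj})$.

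\textbf{From similarity of $\beta_i$ to similarity of $\alpha_i$.} This is the only step requiring a nontrivial lemma: the reduction $\GL_3(A) \to \GL_3(A_{\len-\jj})$ is surjective. This is standard for local rings, since an entrywise lift of a matrix has determinant congruent mod $\m$ to a unit, hence a unit. Given $Y \in \GL_3(A_{\len-\jj})$ with $Y\beta_1 Y^{-1}=\beta_2$, lift to $X \in \GL_3(A)$. Then $X\tilde\beta_1 X^{-1} - \tilde\beta_2$ has entries in $\m^{\len-\jj}$, so $\pi^\jj(X\tilde\beta_1 X^{-1} - \tilde\beta_2)$ has entries in $\m^\len = 0$. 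Therefore $X\alpha_1 X^{-1} = dI + \pi^\jj X\tilde\beta_1 X^{-1} = dI + \pi^\jj \tilde\beta_2 = \alpha_2$.

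The only genuinely delicate point is the surjectivity of reduction for $\GL_3$, and the resulting need to verify that the lifted $X$ really does conjugate $\alpha_1$ to $\alpha_2$ \emph{in $\M_3(A)$}, not merely modulo $\m^{\len-\jj}$; the factor of $\pi^\jj$ in front of $\beta$ is what makes this work.
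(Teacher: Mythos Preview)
Your proof is correct. The paper actually states Proposition~\ref{prop:central} without proof, treating it as a routine extension of Lemma~\ref{canonical22} (whose own proof in the paper is two lines and only addresses the cyclicity of $\beta$ in the $2\times 2$ setting). Your write-up therefore supplies exactly the details the paper leaves implicit: that $K_\jj$ gives a unique scalar representative, that $\jj$ and $d$ are conjugation-invariant because scalars are central, and the two directions of the similarity equivalence via the identification $\pi^\jj A\cong A_{\len-\jj}$ together with the surjectivity of $\GL_3(A)\to\GL_3(A_{\len-\jj})$. There is no discrepancy in approach---you have simply written out what the authors regarded as straightforward.
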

It follows that the assignment $\alpha \mapsto \jj(\alpha)$ is a similarity invariant. Writing $\alpha = dI+\pi^{\jj}\beta$, we are therefore reduced to classifying similarity classes of matrices $\beta$ which are not congruent to a scalar modulo $\m$, that is, of types \eqref{type:decompnoncyc}, \eqref{type:indecompnoncyc} or \eqref{type:cyclic} in Theorem \ref{theorem:field}.

If \eqref{type:cyclic} occurs, then arguing as in Lemma \ref{canonical22}, it follows that $\beta$ is cyclic, hence determined by its characteristic polynomial. If \eqref{type:decompnoncyc} occurs we are essentially reduced to the $n=2$ case.

\begin{prop}
  \label{prop:not_so_hard}
  Suppose that the reduction of $\beta$ modulo $\m$ is $D(\overline a,\overline b,\overline b)$, with $\overline a,\overline b\in \kk$ and $\overline a\neq \overline b$.
Then $\beta$ is similar to a unique matrix of the form
\begin{equation}
  \label{eq:not_so_hard_canonical_form}\tag{$*$}
  D(a,bI+\pi^{\jj}C(c,d)),
\end{equation}
where  $1\leq \jj\leq \len$, $c,d\in A_{\len-\jj}$, $a\in A$ and $b\in K_{\jj}$
with $a- \bar{a} \equiv b - \overline b \equiv 0 \pmod \m$.
\end{prop}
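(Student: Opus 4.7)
The plan is to reduce the problem to two pieces: a $1\times 1$ block corresponding to the simple eigenvalue $\bar a$, and a $2\times 2$ block over $A$ lifting $\bar b I$, and then invoke the $2\times 2$ theory developed in Section~2.

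\textbf{Step 1 (block-diagonalization).} Since $\bar\beta$ is similar to $D(\bar a,\bar b,\bar b)$ with $\bar a\neq \bar b$, its characteristic polynomial factors over $\kk$ as $(x-\bar a)(x-\bar b)^2$, a coprime factorization. I would use this to lift the primary idempotent $\bar e\in \kk[\bar\beta]$ projecting onto the $\bar a$-eigenspace to an idempotent $e\in A[\beta]$ commuting with $\beta$. The direct-sum decomposition $A^3=eA^3\oplus(1-e)A^3$ into free $\beta$-invariant submodules of ranks $1$ and $2$ then produces $X\in \GL_3(A)$ with
\[
X\beta X^{-1}=D(a,\gamma),\qquad a\in A,\ \gamma\in \M_2(A),
\]
where $a\equiv \bar a \pmod\m$ and $\gamma\equiv \bar b I\pmod\m$. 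An alternative, entirely elementary route is to conjugate iteratively by block-unitriangular matrices of the form $I+N$ with $N$ strictly off-diagonal; the off-diagonal commutator with the diagonal block picks up the factor $a-\bar b$, a unit, so the obstruction in $\m^k/\m^{k+1}$ can be killed at each stage, and the process terminates modulo every $\m^k$.

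\textbf{Step 2 (applying the $2\times 2$ case).} Since $\bar\gamma=\bar b I$ is scalar, Lemma~\ref{canonical22} applied to $\gamma$ produces a maximal $\jj$ with $1\leq \jj\leq \len$, a unique $b\in K_\jj$ lifting $\bar b$, and a cyclic $\beta'\in \M_2(A_{\len-\jj})$ with $\gamma=bI+\pi^\jj\beta'$. Theorem~\ref{theorem:2} then replaces $\beta'$ by its companion matrix $C(c,d)$ with uniquely determined $c,d\in A_{\len-\jj}$. Composing with Step~1 puts $\beta$ into the desired form $D\bigl(a,bI+\pi^\jj C(c,d)\bigr)$, and the conditions $a\equiv\bar a$ and $b\equiv\bar b \pmod\m$ are automatic from the construction.

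\textbf{Step 3 (uniqueness).} For uniqueness I would track the data intrinsically. The element $a$ is the unique root in $A$ of the characteristic polynomial of $\beta$ that reduces to $\bar a$: by the coprime factorization $(x-\bar a)(x-\bar b)^2$ over $\kk$, Hensel's lemma guarantees exactly one such root. Hence $a$ is determined by the similarity class of $\beta$. Once $a$ is pinned down, the invariant complement $(1-e)A^3$ is determined up to the $\GL_2(A)$-action, so the similarity class of $\gamma$ is a similarity invariant of $\beta$. Uniqueness of $\jj$, $b$, $c$, $d$ then follows by applying the uniqueness parts of Lemma~\ref{canonical22} and Theorem~\ref{theorem:2} to $\gamma$.

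\textbf{Main obstacle.} The only non-routine ingredient is the block-diagonalization in Step~1. The delicate point is ensuring the idempotent really lifts from $\kk[\bar\beta]$ to $A[\beta]$, which is automatic when $A$ has finite length (since $\m$ is nilpotent) but requires a separate argument if $\len=\infty$ and $A$ is not complete. In that setting the iterative-conjugation variant is the cleanest path: the uniformity of the estimate per step (controlled by $a-\bar b\in A^\times$) is what makes the procedure converge and produces an honest $X\in \GL_3(A)$ rather than merely an element of a completion.
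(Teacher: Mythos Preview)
Your argument is correct but follows a different path from the paper's in both the existence and uniqueness parts.

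For existence, the paper works by hand: it conjugates $\beta$ by a matrix $I+xE^{12}+yE^{13}$, writes the $(1,2)$ and $(1,3)$ entries as two polynomials in $x,y$, observes that $(0,0)$ is a smooth $\kk$-point of the resulting scheme (precisely because $\bar a\neq\bar b$), and invokes Hensel's lemma to lift it to a solution over $A$; a symmetric step with $I+xE^{21}+yE^{31}$ then clears the other off-diagonal block. Your route via lifting the primary idempotent from $\kk[\bar\beta]$ to $A[\beta]$ (or, equivalently, iterated block-unitriangular conjugation) is more conceptual and immediately explains why the splitting exists, but it relies on exactly the same Henselian hypothesis: idempotent lifting in the finite $A$-algebra $A[\beta]$, or convergence of the iteration, needs $A$ Henselian just as the paper's Hensel step does. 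So your ``main obstacle'' paragraph is accurate, but note that the paper's own proof has the identical limitation; neither argument covers a non-Henselian DVR.

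For uniqueness, the paper proves a separate block-matrix lemma (Lemma~\ref{lemma:not_so_hard}): if $a,a'\in A^\times$ and $B,B'\equiv 0\pmod\m$, then $D(a,B)\sim D(a',B')$ forces $a=a'$ and $B\sim B'$, by a direct four-block computation. Your argument instead pins down $a$ as the unique Hensel lift of $\bar a$ among the roots of $\chi_\beta$, and recovers the similarity class of $\gamma$ from the canonical $\beta$-invariant submodule $(1-e)A^3$. This is cleaner conceptually and avoids the auxiliary lemma, at the cost of appealing once more to the idempotent $e\in A[\beta]$ being intrinsic. Both approaches then finish identically via the $2\times 2$ theory of Theorem~\ref{theorem:2}.
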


\begin{proof} Denote the entries of $\beta$ by $\beta_{ij}$. The entries in places $(1,2)$ and $(1,3)$ of the conjugation of $\beta$ by a matrix of the form $I+xE^{12}+yE^{13}$ are
\begin{equation} \label{eq:lifting.conjugation.1}
\beta_{12}+(\beta_{22}-\beta_{11})x+\beta_{32}y-\beta_{12}x^2-\beta_{13}xy
\end{equation}
and
\begin{equation} \label{eq:lifting.conjugation.2}
\beta_{13}+(\beta_{33}-\beta_{11})y+\beta_{23}x-\beta_{13}y^2-\beta_{23}xy
\end{equation}
respectively.

Let $X$ be the scheme defined by the polynomials (\ref{eq:lifting.conjugation.1}) and (\ref{eq:lifting.conjugation.2}). By our assumptions, $\beta_{23},\beta_{32}\equiv 0 \pmod\m$ and $\beta_{22}-\beta_{11},\beta_{33}-\beta_{11}\not\equiv0\pmod\m$. Therefore, the point $(0,0)\in\kk^2$ is a non-singular point of $X\times_{\textrm{Spec(A)}}\textrm{Spec($\kk$)}$. By the Hensel lemma, it can be lifted to a point $(x_0,y_0)\in X(A)$. Conjugating $\beta$ by $I+x_0E^{12}+y_0E^{13}$, we can assume that $\beta_{12}=\beta_{13}=0$. Note that this conjugation does not change the reduction of $\beta$ modulo $\m$. Similarly, there are $x_1,y_1\in A$, such that conjugating $\beta$ by $I+x_1E^{21}+y_1E^{31}$ makes $\beta_{21}$ and $\beta_{31}$ equal to zero. Since this last conjugation does not change the entries in places $(1,2)$ and $(1,3)$, the result is a block diagonal matrix.

The classification of similarity classes for $2\times 2$ matrices (Theorem~\ref{theorem:2}) shows that $\beta$ is similar to a matrix of the kind in (\ref{eq:not_so_hard_canonical_form}).
  That no two distinct matrices of type (\ref{eq:not_so_hard_canonical_form}) are similar follows from Lemma~\ref{lemma:not_so_hard} below applied to $\beta-bI$.
\end{proof}
\begin{lemma}
  \label{lemma:not_so_hard}
   Let $B$ and $B'$ be two matrices in $\M_2(A)$ which are congruent to $0$ modulo $\m$, and let $a, a' \in A^{\times}$.
  Then the two block matrices
  \begin{equation*}
    \beta=\mat a00B \text{ and } \beta'=\mat{a'}00{B'}
  \end{equation*}
  are similar if and only if $a=a'$ and $B$ is conjugate to $B'$.
\end{lemma}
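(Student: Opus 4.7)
The plan is to analyze the conjugating matrix $X \in \GL_3(A)$ satisfying $X\beta = \beta' X$ by breaking it into blocks conformal with the decomposition $1 + 2 = 3$, and to show the off-diagonal blocks must vanish. The reverse implication is immediate: if $a = a'$ and $Y B Y^{-1} = B'$ for $Y \in \GL_2(A)$, then $D(1, Y)$ conjugates $\beta$ to $\beta'$.

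For the forward implication, write
\[
X = \mat{X_{11}}{X_{12}}{X_{21}}{X_{22}}
\]
where $X_{11} \in A$, $X_{12} \in \M_{1 \times 2}(A)$, $X_{21} \in \M_{2 \times 1}(A)$, and $X_{22} \in \M_2(A)$. Expanding $X\beta = \beta' X$ blockwise yields the four identities
\[
(a - a') X_{11} = 0, \quad X_{12}(B - a' I_2) = 0, \quad (B' - aI_2) X_{21} = 0, \quad X_{22} B = B' X_{22}.
\]

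The key observation is that $B - a' I_2$ and $B' - a I_2$ are \emph{invertible} in $\M_2(A)$. Indeed, by hypothesis $B, B' \equiv 0 \pmod{\m}$ and $a, a' \in A^\times$, so the reductions of $B - a' I_2$ and $B' - a I_2$ modulo $\m$ are the scalar matrices $-a' I_2$ and $-aI_2$ in $\GL_2(\kk)$; hence the lifts lie in $\GL_2(A)$. From the second and third block equations above it follows immediately that $X_{12} = 0$ and $X_{21} = 0$, so $X = D(X_{11}, X_{22})$ is block diagonal.

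Since $X \in \GL_3(A)$, the diagonal blocks $X_{11} \in A^\times$ and $X_{22} \in \GL_2(A)$ are units. The first block equation $(a-a') X_{11} = 0$ then forces $a = a'$, while the fourth gives $X_{22} B X_{22}^{-1} = B'$, so $B$ is conjugate to $B'$. The only step requiring any thought is the invertibility observation, and there is no substantial obstacle beyond it.
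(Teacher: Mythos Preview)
Your proof is correct and follows essentially the same block-decomposition strategy as the paper. The one minor difference is that the paper only argues the off-diagonal blocks are $\equiv 0 \pmod{\m}$ (from $a'y = yB \equiv 0$), which already suffices to make the diagonal blocks invertible; you go slightly further and observe that $B - a'I_2$ and $B' - aI_2$ are units, forcing $X_{12}$ and $X_{21}$ to vanish exactly. This is a small sharpening but not a different approach.
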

\begin{proof}
  Clearly, the condition for similarity in the statement of the lemma is sufficient.
  To see that it is necessary, suppose $X\in \GL_3(A)$ is such that $X\beta=\beta'X$. Write $X$ as a block matrix $\big(\begin{smallmatrix} x& y \\ z& W \end{smallmatrix}\big)$. Evaluation of the above equality in terms of block matrices gives
  \begin{equation*}
    \mat{xa}{yB}{za}{WB}=\mat{a'x}{a'y}{B'z}{B'W}.
  \end{equation*}
  Since $B\equiv 0 \mod \m$ and $a'$ is a unit, comparing the top right entries shows that $y\equiv 0 \mod \m$.
  Similarly, $z\equiv 0 \mod \m$.
  It follows that $x$ and $W$ are invertible, which implies that $a=a'$ and $B$ is similar to $B'$.
\end{proof}

It remains to analyze case \eqref{type:indecompnoncyc} of Theorem \ref{theorem:field}, to which we dedicate the next subsection.

\subsection{The hard case}
\label{sec:hard} Assume that $\beta \in \M_3(A)$ is such that its reduction modulo $\m$ is of the form
  \[
  J(0,\bar{d})= \Mat {\bar{d}}000{\bar{d}}100{\bar{d}}, \quad \bar{d}\in \kk.
  \]

\begin{prop} Any matrix $\beta \in \M_3(A)$ which lies above $J(0,\bar{d}) \in \M_3(\kk)$ is a conjugate of a matrix of the form
\[
E=E(m,a,b,c,d)=\left[\begin{matrix} 0 & \pi^m  & 0 \\ 0 & 0 & 1 \\ a & b & c \end{matrix}\right]+dI
\]
with  $m \ge 1$ and $a,b,c,d-\bar{d}\equiv 0 \pmod \m$.

\end{prop}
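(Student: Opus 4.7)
Reduce the problem: set $\gamma := \beta - dI$ with $d$ a lift of $\bar d$; then $\bar\gamma = E^{23}$, and the claim becomes that $\gamma$ is conjugate to $\left(\begin{smallmatrix} 0 & \pi^m & 0 \\ 0 & 0 & 1 \\ a & b & c \end{smallmatrix}\right)$ for some $m \ge 1$ and $a, b, c \in \m$. Equivalently, we seek an $A$-basis $\{w_1, w_2, w_3\}$ of $A^3$ with
\[
\gamma w_3 = w_2 + c w_3, \quad \gamma w_2 = \pi^m w_1 + b w_3, \quad \gamma w_1 = a w_3.
\]
Successive elimination reduces these to the single vector identity $\pi^m w_1 = \mathrm{adj}(\gamma)\, w_3$, where $w_2 := (\gamma - cI) w_3$ and $\mathrm{adj}(\gamma) := \gamma^2 - \tr(\gamma)\gamma + \sigma_2(\gamma)\, I$, together with the Cayley--Hamilton identity applied to $w_3$.

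Matching the three invariants forces $c := \tr(\gamma)$, $b := -\sigma_2(\gamma)$, and $\det(\gamma) = a\pi^m$ with $a \in \m$ and $m \ge 1$; this is consistent because $\bar\gamma = E^{23}$ has vanishing trace, second symmetric function, and adjugate, so $c, b \in \m$ and $\det(\gamma) \in \m^2$. For any lift $w_3$ of $\bar e_3$, the vector $w_2$ is automatically a primitive lift of $\bar e_2$, and the identity $\gamma \cdot \mathrm{adj}(\gamma) = \det(\gamma) I$ yields $\gamma w_1 - a w_3 \in \pi^{\ell - m} A^3$. It remains to choose $w_3$ so that (i) $\mathrm{adj}(\gamma)\, w_3$ is divisible by $\pi^m$, so some $w_1$ with $\pi^m w_1 = \mathrm{adj}(\gamma)\, w_3$ exists, and (ii) this $w_1$ is primitive and $\{w_1, w_2, w_3\}$ is an $A$-basis. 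Writing $\gamma = E^{23} + \pi H$ and expanding $\mathrm{adj}(\gamma) = \pi \Delta$, the residue $\bar\Delta$ is explicit in the entries of $\bar H$; adjusting $w_3$ by an element of $A e_1 + A e_2$ handles (i) and (ii) in the generic case with $m = 1$, while degenerate cases (where even this adjustment fails at the current order) are resolved by iterating to larger $m$.

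The principal obstacle is the finite-length case $\ell < \infty$: the identity yields only $\pi^m(\gamma w_1 - a w_3) = 0$, leaving a residue $\gamma w_1 = a w_3 + \pi^{\ell - m} u$ for some $u \in A^3$. Adjusting $w_1$ within the coset $w_1 + \pi^{\ell - m} A^3$ to kill this residue amounts to solving $\gamma v \equiv u \pmod{\pi^m A^3}$ inside $A^3 / \pi^m A^3$. Its solvability depends on $\bar\gamma = E^{23}$ and on the specific $u$ produced above; verifying it, together with the termination of the $m$-iteration from the previous paragraph, is the main technical content of the proof.
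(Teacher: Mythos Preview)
Your proposal is not a proof but a strategy with explicitly acknowledged gaps: you say yourself that ``verifying it, together with the termination of the $m$-iteration from the previous paragraph, is the main technical content of the proof,'' and neither verification is carried out. In particular, the solvability of $\gamma v \equiv u \pmod{\pi^m A^3}$ is a genuine issue, since $\bar\gamma = E^{23}$ has rank~$1$ over~$\kk$ (image $\kk e_2$), so you would need to show that the specific residue $u$ you produce lands in the image of $\gamma$ modulo $\pi^m$ --- and this must be argued, not asserted. Likewise, the iteration ``to larger $m$'' when $\bar\Delta w_3$ fails to be primitive is not shown to terminate with a triple $\{w_1,w_2,w_3\}$ that is actually a basis.

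More importantly, your route is much harder than necessary. The paper's proof is a six-step sequence of elementary conjugations: scale to make the $(2,3)$-entry equal to~$1$; use that unit entry as a pivot to kill the $(1,3)$, $(2,1)$, and $(2,2)$ entries by conjugating with suitable $x^{ij} = I + xE^{ij}$; subtract a scalar to kill the $(1,1)$-entry; and finally rescale to put the $(1,2)$-entry in the form $\pi^m$. Each step is a one-line computation, and no torsion or iteration issues arise because one never divides by anything except the unit $(2,3)$-entry. Your adjugate/Cayley--Hamilton approach discards exactly this local pivot structure in favour of global invariants, which is why you then have to reconstruct the basis vector $w_1$ by division by $\pi^m$ and confront the resulting ambiguity in finite length. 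The existence statement here is soft, and the direct reduction captures that.
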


\begin{proof}

 Reduce $\beta$ to a matrix of the form $E(m,a,b,c,d)$ by the following sequence of similarity transformations:
  \begin{enumerate}
  \item conjugation of $\beta$ by a diagonal matrix makes the $(2,3)$-entry
    equal to $1$.
  \item conjugation by $(\beta_{13})^{12}$ kills the $(1,3)$-entry.
  \item addition of a scalar matrix kills the $(1,1)$-entry.
  \item conjugation by $(\beta_{21})^{31}$ kills the $(2,1)$-entry.
  \item conjugation by $(\beta_{22})^{32}$ kills the $(2,2)$-entry.
  \item conjugating by a diagonal matrix makes the $(1,2)$-entry of the form $\pi^m$ for some positive integer $m$.
  \end{enumerate}
\end{proof}

In order to avoid redundancies we should check when two representatives $E(m_1,a_1,b_1,c_1,d_1)$ and $E(m_2,a_2,b_2,c_2,d_2)$ are conjugate. This question is the core of the difficulty when passing from $n=2$ to $n=3$. A similar problem is considered in \cite{MR714872}. Note that the four types of {\em essentially cyclic} matrices defined by Pizarro are similar to some $E(m,a,b,c,d)$, in particular they are similar to each other. The parametrization given in \cite[Thm 2.17]{MR714872} is ineffective in the sense that it does not give a reasonable way of enumerating the
classes for finite rings. This is the main point in which we take an alternative route and focus
on the branching of classes of level $\ii$ in level $\ii+1$, which is the subject of the next section. Consequently,
we shall be able to enumerate the similarity classes in
the case of finite rings in Section 5.

\section{Analysis of the hard case}

Let $\alpha_1=E(m_1,a_1,b_1,c_1,d_1)$ and $\alpha_2=E(m_2,a_2,b_2,c_2,d_2)$ be in $\M_3(A_\ii)$. The elements $\alpha_1$ and $\alpha_2$ are similar if and only if there exist $X=(x_{ij}) \in \GL_3(A_\ii)$ which satisfies
\begin{equation}\label{Y}
Y:=\alpha_1X-X\alpha_2=0.
\end{equation}
Such a matrix $X$ should satisfy
\begin{equation}\label{YA}
\begin{split}
Y_{21}:& \quad x_{31}=a_2x_{23}+(d_2-d_1) x_{21} \qquad \qquad \qquad \qquad \qquad \qquad \qquad \\
Y_{22}:& \quad x_{32}= \pi^{m_2} x_{21}+b_2 x_{23} + (d_2-d_1) x_{22}\\
Y_{23}:& \quad x_{33}= x_{22}+(c_2-d_1+d_2)x_{23} \\
Y_{13}:& \quad x_{12}=\pi^{m_1} x_{23}-c_2x_{13}+ (d_1-d_2) x_{13}
\end{split}
\end{equation}
Using equations \eqref{YA} and the fact that $d_1-d_2,\pi
^{m_i},a_i,b_i,c_i$ are all congruent to $0$ mod $\m$, we get that
$\det(X)$ is congruent to $x_{11}x_{22}^2$ modulo $\m$. Therefore, we get
the extra condition
\begin{equation}\label{Ydet}
\det: \quad  x_{11},x_{22} \in A^{\times}. \qquad \qquad \qquad \qquad \qquad \qquad \qquad
\end{equation}
There are five additional equations
\begin{equation}\label{YB}
\begin{split}
Y_{11}:& \quad \pi^{m_1}x_{21}-a_2x_{13}+(d_1-d_2) x_{11}=0 \qquad \qquad \qquad \qquad \qquad \qquad \\
Y_{12}:& \quad \pi^{m_1}x_{22}-\pi^{m_2}x_{11}-b_2x_{13}+(d_1-d_2) x_{12}=0 \\
Y_{31}:& \quad a_1x_{11}+b_1x_{21}+c_1x_{31}-a_2x_{33}+(d_1-d_2)x_{31}=0 \\
Y_{32}:& \quad a_1x_{12}+b_1x_{22}+c_1x_{32}-\pi^{m_2}x_{31}-b_2x_{33}+(d_1-d_2) x_{32}=0\\
Y_{33}:& \quad a_1x_{13}+b_1x_{23}+c_1x_{33}-x_{32}-(c_2-d_1+d_2)x_{33}=0
\end{split}
\end{equation}
whose solution in general is very complicated. To this end, we shall narrow down possibilities by looking at the centralizers of representatives and then be able to solve these equations.

\subsection{Centralizers} In this section we shall take a closer look on the centralizers in $\GL_3(A_\ii)$ of the elements $E(m,a,b,c,d)$, which are subgroups that are defined over $A$. We shall use the Greenberg functor $\mathcal{F}$ \cite{MR0126449,MR0156855} which enables us to view them as algebraic groups over $\kk$. Taking $\alpha=\alpha_1=\alpha_2=E(m,a,b,c,d)$ in \eqref{Y} we get that $X=(x_{ij})$ commutes with $\alpha$ if and only if
\begin{equation}\label{equations}
\begin{split}
 \quad a x_{13}&=\pi ^m x_{21} \\
 \quad b x_{13}&=\pi^m (x_{22}-x_{11})\\
 \quad b x_{21}&=a(x_{22}-x_{11}),
\end{split}
\end{equation}
with $x_{12}, x_{31},x_{32}, x_{33}$ determined, respectively, by equations $Y_{13},Y_{21},Y_{22}, Y_{23}$ above, together with an additional free variable $x_{23}$.
The system of equations \eqref{equations} possess symmetries which can be made more transparent. Write $a=u_1 \pi^{t_1}$ and $b=u_2 \pi^{t_1}$ for some invertible elements $u_1,u_2 \in A_{\ii}^{\times}$ and replace $m$ by $t_3$. Then, the system \eqref{equations} can be written as

\begin{equation}\label{equations'}
\begin{split}
\text{(i)} \quad \pi^{t_1} y_3   &=\pi^{t_3} y_1, \\
\text{(ii)} \quad \pi^{t_2} y_3  &=\pi^{t_3} y_2,\\
\text{(iii)} \quad \pi^{t_2} y_1 &=\pi^{t_1}y_2,
\end{split}
\end{equation}
with new variables
\[
\begin{split}
&y_1=u_1^{-1}x_{21},\\
&y_2=u_2^{-1}(x_{22}-x_{11}),\\
&y_3=x_{13}.
\end{split}
\]

In order to solve these equations in $\M_3(A_\ii)$, we may assume, by relabeling the variables, that $t_1 \le t_2
\le t_3$. It then follows that equation (ii) can
be omitted. We are left with two equations
\[
\begin{split}
&y_2 = \pi^{t_2-t_1}y_1 \mod \pi ^{\ii-t_1} \\
&y_3 = \pi^{t_3-t_1}y_1 \mod \pi ^{\ii-t_1}
\end{split}
\]
This gives
\[
 \left \{   (y_1,y_2,y_3) ~\text{satisfying \eqref{equations'}}
\right \}  = A_{\ii} \times A_{t_1} \times A_{t_1},
\]
which using the Greenberg functor can be identified with the affine $\kk$-space $\mathbb{A}_\kk^{\ii+2t_1}$.
To ensure that $X \in \GL_3(A_\ii)$, we need $x_{11},x_{22}\in A_\ii
^{\times}$ by \eqref{Ydet}. The elements of the centralizer of $\alpha$ in $\GL_3(A_\ii)$, are identified under the Greenberg functor with the following $\kk$-varieties, depending on the values of the $t_i$'s, which we now relabel according to the original variables: $m$, $v(a)$ and $v(b)$. The centralizers depend on the relative value of $\min\{m,v(a)\}$ and $v(b)$ as follows
 \begin{enumerate}
 \item[]  $v(b) \le \min\{m,v(a)\}$: \qquad  $\mathcal{F}\left(\text{Stab}_{\GL_3(A_{\ii})}(\alpha)\right) \simeq \left(\mathbb{A}_{\kk}^{\times}\right)^2 \times  \mathbb{A}_{\kk}^{3\ii+2v(b)-2}$,
     \smallskip
     \item[]  $v(b)  > \min\{m,v(a)\}$: \qquad $\mathcal{F}\left(\text{Stab}_{\GL_3(A_{\ii})}(\alpha)\right) \simeq \mathbb{A}_{\kk}^{\times} \times  \mathbb{A}_{\kk}^{3\ii+2\min\{m,v(a)\}-1}$,
     \end{enumerate}
where $\mathbb{A}_\kk^{\times}=\mathbb{A}_{\kk} \smallsetminus \{0\}$ stands for the punctured affine line.
The difference between the two cases arises from \eqref{equations}, as $x_{11}$ and $x_{22}$ can be chosen independently if $v(b) \le \min\{m,v(a)\}$ but not if $v(b) > \min\{m,v(a)\}$.

\medskip

We shall now describe the branching rules of these representatives when passing from $\M_3(A_{\ii-1})$ and $\M_3(A_{\ii})$. We fix a compatible system of representatives in level $\ii$ which lie above their reduction in level $\ii-1$.

\begin{claim}\label{centralizer-types} Let $J(\bar{c},\bar{d})=E(\ii-1,0,0,\bar{c},\bar{d}) \in M_3(A_{\ii-1})$ with $\bar{c} \equiv 0 \mod \m$. Then the following four types of representatives which lie above $J(\bar{c},\bar{d})$ in $\M_3(A_\ii)$ represent disjoint similarity classes.
\begin{align*}
& \quad \underline{\text{Type I}} &   & \qquad \underline{\text{Type II}} &  & \quad \underline{\text{Type III}_\epsilon}   \quad (\epsilon \in \{0,1\}) & &\\
& \left[\begin{matrix} 0 & 0  & 0 \\0 &0 & 1 \\ 0 & 0 & c \end{matrix}\right]+dI & & \left[\begin{matrix} 0 & \pi^{\ii-1}\epsilon  & 0 \\ 0 & 0 & 1 \\  \pi^{\ii-1}a &  \pi^{\ii-1}b & c \end{matrix}\right]+dI & & \left[\begin{matrix} 0 & \pi^{\ii-1}\epsilon  & 0 \\ 0 & 0 & 1 \\ \pi^{\ii-1}a & 0 & c \end{matrix}\right]+dI & & \\
& ~ & & a \in \kk, b \in \kk^{\times}, \epsilon \in \{0,1\} & & \qquad (a,\epsilon) \ne (0,0) & &
\end{align*}

with $c,d \in A_\ii$ such that $c - \bar{c} \equiv d- \bar{d} \equiv 0 \pmod {\m^{\ii-1}}$.
\end{claim}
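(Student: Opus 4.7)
The plan is to attach a short list of similarity invariants to matrices of the four types and show these invariants take distinct values across different types.

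First, I would set the stage. Every representative in the four types has the same reduction $J(\bar c, \bar d) \in \M_3(A_{\ii-1})$. Fixing the lift $\alpha_0 := J(c, d) \in \M_3(A_\ii)$, each representative $\alpha$ can be written $\alpha = \alpha_0 + \pi^{\ii-1}\delta$, and the relevant datum is $\bar\delta := \delta \bmod \pi \in \M_3(\kk)$. Reading off the four matrix forms: Type~I yields $\bar\delta = 0$; Type~II yields $\bar\delta = \epsilon E^{12} + aE^{31} + bE^{32}$ with $b \in \kk^\times$; and Type~III$_\epsilon$ yields $\bar\delta = \epsilon E^{12} + aE^{31}$.

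Next, I would linearize the conjugacy equation $X\alpha = \alpha' X$. Reducing modulo $\pi^{\ii-1}$, the image of $X$ lies in $H := \mathrm{Stab}_{\GL_3(A_{\ii-1})}(J(\bar c, \bar d))$, and a direct matrix computation shows $H$ consists of the matrices
\[
\Mat{x_{11}}{-\bar c x_{13}}{x_{13}}{x_{21}}{x_{22}}{x_{23}}{0}{0}{x_{22}+\bar c x_{23}}, \quad x_{11}, x_{22} \in A_{\ii-1}^\times.
\]
The key observation is that the lift $\hat X \in \GL_3(A_\ii)$ defined by the same formula with $\bar c$ replaced by $c$ commutes with $\alpha_0$ \emph{exactly}, not merely modulo $\pi^{\ii-1}$. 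Hence any lift of $X \bmod \pi^{\ii-1}$ is of the form $\hat X + \pi^{\ii-1}X_1$ with $X_1 \in \M_3(\kk)$, and substituting into $X\alpha = \alpha' X$, cancelling $\hat X \alpha_0 = \alpha_0 \hat X$, and using $\alpha_0 \bmod \pi = d_0 I + E^{23}$ (valid because $c \in \m$), the $\pi^{\ii-1}$-coefficient becomes
\[
\bar X \bar\delta - \bar\delta' \bar X + [X_1, E^{23}] = 0 \quad \text{in } \M_3(\kk),
\]
where $\bar X := \hat X \bmod \pi \in H_\kk := \mathrm{Stab}_{\GL_3(\kk)}(E^{23})$.

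With this linearization in hand, I would extract the invariants entry by entry. Since the third row of any $\bar X \in H_\kk$ is $(0,0,x_{22})$ and the second column of $\bar X^{-1}$ is $(0, x_{22}^{-1}, 0)^T$, one computes $(\bar X \bar\delta \bar X^{-1})_{32} = \bar\delta_{32}$; a similar check gives that $[X_1, E^{23}]\bar X^{-1}$ has zero $(1,2)$-, $(3,1)$-, and $(3,2)$-entries. Thus $\bar\delta_{32}$ is a full similarity invariant, and when $\bar\delta_{32} = 0$ the entries $\bar\delta_{12}$ and $\bar\delta_{31}$ transform under $H_\kk$ purely by scaling by the units $x_{11}x_{22}^{-1}$ and $x_{22}x_{11}^{-1}$, so their vanishing too is a similarity invariant. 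Evaluating: Type~I has all three equal to zero; Type~II has $\bar\delta_{32} = b \ne 0$; Type~III$_0$ has $\bar\delta_{32} = \bar\delta_{12} = 0$ but $\bar\delta_{31} = a \ne 0$; Type~III$_1$ has $\bar\delta_{32} = 0$ but $\bar\delta_{12} = 1 \ne 0$. These patterns are pairwise distinct, so the four types represent disjoint similarity classes.

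The hardest step is exhibiting the canonical $A_\ii$-lift $\hat X$ that centralizes $\alpha_0$ on the nose, since this is what turns the nonlinear conjugacy equation into the clean linear equation above. After that, tracking the three invariants through the $H_\kk$-action and the $[X_1, E^{23}]$ correction is a routine entry-by-entry verification.
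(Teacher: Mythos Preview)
Your argument is correct and genuinely different from the paper's. The paper separates the types by computing, via the Greenberg functor, the centralizer of each representative as a $\kk$-variety: Type~I gives $(\mathbb{A}_\kk^\times)^2\times\mathbb{A}_\kk^{5\ii-2}$, Type~II gives $(\mathbb{A}_\kk^\times)^2\times\mathbb{A}_\kk^{5\ii-4}$, and Type~III gives $\mathbb{A}_\kk^\times\times\mathbb{A}_\kk^{5\ii-3}$; since these varieties are pairwise non-isomorphic, similar matrices cannot lie in different types. The split between III$_0$ and III$_1$ is then read off directly from equation $Y_{12}$. Your route instead linearizes the conjugacy condition: after noting that the centralizer $H$ of $J(\bar c,\bar d)$ in $\GL_3(A_{\ii-1})$ lifts verbatim (with $\bar c$ replaced by $c$) to the exact centralizer of $\alpha_0$ in $\GL_3(A_\ii)$, you reduce $X\alpha=\alpha'X$ to the linear equation $\bar X\bar\delta-\bar\delta'\bar X+[\bar X_1,E^{23}]=0$ over $\kk$ and extract the off-diagonal entries $\bar\delta_{32}$, $\bar\delta_{12}$, $\bar\delta_{31}$ as explicit invariants. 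This is more elementary (no Greenberg machinery, no appeal to variety isomorphism types) and yields the III$_0$/III$_1$ separation in the same stroke as the rest; the paper's centralizer computation, on the other hand, is what later feeds directly into the orbit-size enumeration in Section~5, so it is doing double duty there.

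One small imprecision: when you write ``Type~I yields $\bar\delta=0$'' you are tacitly taking the same $(c,d)$ for $\alpha_0$ and for the representative; in general a Type~I representative $J(c',d')$ gives $\bar\delta$ supported on the diagonal. This is harmless, since your three invariants $\bar\delta_{32},\bar\delta_{12},\bar\delta_{31}$ are all off-diagonal and hence unaffected.
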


\begin{proof} The $\kk$-varieties which correspond to each type are $\mathcal{F}(Z_I)\simeq\left(\mathbb{A}_\kk^{\times}\right)^2 \times \mathbb{A}_\kk^{5\ii-2}$, $\mathcal{F}(Z_{II})\simeq\left(\mathbb{A}_\kk^{\times}\right)^2 \times \mathbb{A}_\kk^{5\ii-4}$ and $\mathcal{F}(Z_{III})\simeq \mathbb{A}_\kk^{\times} \times \mathbb{A}_\kk^{5\ii-3}$, hence each type remains invariant under conjugation. In order to separate the subtypes $\text{III}_0$ and $\text{III}_1$ we use equation $Y_{12}=0$, which reads in this case $x_{11} \epsilon_1 \equiv x_{22} \epsilon_2 \pmod \m$, therefore if two matrices of type III are similar they must have the same $\epsilon$.

\end{proof}

\begin{remark} When $\kk$ is finite, Claim \ref{centralizer-types} can be proved without using the Greenberg machinery. We just note that if the size of $\kk$ is $q$, then the size of the stabilizer of the matrix is $(q-1)^2q^{5\ii-2},(q-1)^2q^{5\ii-4},(q-1)q^{5\ii-3}$ in cases I,II, and III respectively. Since no two of these numbers can be equal, types I,II,III are disjoint. In fact, one can show (see \cite{AO07} for complete details) that the centralizers of the types I,II and III modulo $\m$ are isomorphic to
\[
\mathrm{Aut}_{\kk[x]}(\kk[x]/(x^2) \oplus \kk), \quad \mathbb{G}_m(\kk)^2 \times \mathbb{G}_a(\kk) \quad \mathrm{and} \quad \mathbb{G}_m(\kk) \times \mathbb{G}_a(\kk)^2,
\]
respectively.

\end{remark}

\subsection{Sieving away redundancies}

The list of representatives in Claim \ref{centralizer-types} is exhaustive, since we covered all the possible lifts of $J(\bar{c},\bar{d})$. To make sure that there are no repetitions we need a more delicate analysis.

\begin{theorem} The following is an exhaustive list of non-similar elements lying above $J(\bar{c},\bar{d})$ (where $\bar{c} \equiv 0 \mod \m$)
\begin{enumerate}
\item [] $\mathrm{I}$ \qquad $~~E(\ii,0,0,c,d)$, with $d-\bar{d} \equiv c-\bar{c} \equiv 0 \pmod {\m^{\ii-1}}$.

\item [] $\mathrm{II}$ \qquad $E(\ii-1,0,b\pi^{\ii-1},c,d)$, with $c-\bar{c} \equiv d-\bar{d} \equiv 0 \pmod {\m^{\ii-1}}$, $b \in \kk^{\times}$.

\item [] $\mathrm{III}_0$ \quad $E(\ii,\pi^{\ii-1},0,c,0)$, with $c-\bar{c} \equiv 0 \pmod {\m^{\ii-1}}$.

\item [] $\mathrm{III}_1$ \quad $E(\ii-1,a\pi^{\ii-1},0,\bar{c},d)$, with $d-\bar{d} \equiv 0 \pmod {\m^{\ii-1}}$, $a \in \kk$.

\end{enumerate}

All in all, there is a bijection between the conjugacy classes of
elements lying above $J(\bar{c},\bar{d})$ and the set $\kk^2 \sqcup \kk^2\times
\kk^{\times} \sqcup \kk \sqcup \kk^2$ (corresponding to the classes
$\mathrm{I},\mathrm{II},\mathrm{III}_0,\mathrm{III}_1$ respectively).

\end{theorem}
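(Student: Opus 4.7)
Plan. By Claim~\ref{centralizer-types} the four families type I, II, $\mathrm{III}_0$, $\mathrm{III}_1$ already carve out pairwise disjoint collections of similarity classes: the centralizers of matrices from different types have non-isomorphic Greenberg realizations, and within type III the equation $Y_{12}=0$ separates $\epsilon = 0$ from $\epsilon = 1$. What remains is, within each type, to (a) conjugate any representative from Claim~\ref{centralizer-types} into one of the refined forms listed in the theorem, and (b) show that two distinct refined forms of the same type are never conjugate. The bijection with $\kk^2 \sqcup \kk^2 \times \kk^\times \sqcup \kk \sqcup \kk^2$ is then a parameter count.

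For (a) I split any conjugating element $X \in \GL_3(A_\ii)$ into an ``infinitesimal'' piece $I + \pi^{\ii-1}Y$ with $Y \in \M_3(\kk)$ and a ``big'' piece lifted from the centralizer of $\bar\alpha$ modulo $\m$. Infinitesimal conjugations modify the lift by $\pi^{\ii-1}[Y,\bar\alpha]$; with $\bar\alpha = \bar d I + E^{23}$ the explicit commutator $[Y, E^{23}]$ shows that we can freely adjust the $\pi^{\ii-1}$-coefficients at positions $(1,3)$, $(2,1)$, $(2,3)$ and at the antidiagonal pair $((2,2),(3,2))$ up to the sum $M_{22}+M_{32}$, while the positions $(1,1), (1,2), (3,1), (3,3)$ are rigid. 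This immediately handles type I. For $\mathrm{III}_1$ I use the big conjugation $I + yE^{13}$ --- whose commutator with $\alpha$ contributes $\alpha_{31} = a\pi^{\ii-1}$ at the rigid position $(3,3)$ --- to absorb the $\pi^{\ii-1}$-component of $c$, fixing $c=\bar c$. For type II the invertibility of $b$ enables an analogous elementary big conjugation that trades $\epsilon=1$ against an $a$-shift, which is then killed by the infinitesimal $(2,1)$-move. For $\mathrm{III}_0$ a diagonal rescaling normalizes the $(3,1)$-coefficient $a$ to $1$, and the remaining scalar freedom is absorbed into the listed canonical form.

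For (b) I specialize the full conjugation system \eqref{YA}--\eqref{YB} to two normalized representatives of the same type. Equations $Y_{13}$ and $Y_{23}$ solve $x_{12}$ and $x_{33}$ in terms of the remaining entries of $X$; substituting into $Y_{11}, Y_{31}, Y_{12}$ and combining with the unit constraint \eqref{Ydet} on $x_{11}$ and $x_{22}$ forces the $\pi^{\ii-1}$-parts of the surviving parameters to match. The main obstacle I anticipate is the type II check: here $Y_{12}$ couples the $\pi^{\ii-1}$-coefficient of $b$ to $x_{13}$ through the unit $x_{11}$, so one must carefully exploit the residual freedom in the big centralizer of $\bar\alpha$ to confirm that $b$ itself --- not merely $b$ modulo rescaling --- is an invariant.
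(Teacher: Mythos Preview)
Your plan has the same skeleton as the paper's proof, but two of the normalization steps fail as written. First, the commutator is slightly miscomputed: one has $[Y,E^{23}]_{32}=0$ and $[Y,E^{23}]_{33}=Y_{32}=-[Y,E^{23}]_{22}$, so $(3,2)$ is rigid and it is $(3,3)$, not $(3,2)$, that is coupled with $(2,2)$. More seriously, in type~II the entry $a$ lives at the rigid position $(3,1)$, so an ``infinitesimal $(2,1)$-move'' cannot touch it; the paper instead uses a \emph{big} conjugation by (a lift of) $I-eE^{21}$ with $e=a/b$, which lies in the centralizer of $\bar\alpha$ modulo~$\m$, and the invertibility of $b$ enters precisely in solving $a-eb=0$. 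In type~$\mathrm{III}_1$ your conjugation $I+yE^{13}$ shifts the $(3,3)$ entry by $-ya\pi^{\ii-1}$, so when $a=0$ (which is allowed here, since $\epsilon=1$ already guarantees $(a,\epsilon)\ne(0,0)$) you cannot normalize $c$ to $\bar c$ this way. The paper proceeds differently: it shows that the invariants of a $\mathrm{III}_1$ matrix are exactly $a$ and the trace, and exhibits an explicit $X$ conjugating any two representatives with matching invariants.

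For the type~II distinctness check that you flag as the main obstacle, the decisive equation is $Y_{32}$ rather than $Y_{12}$. With $a_1=a_2=0$ the relation $Y_{32}=0$ reduces in $A_\ii$ to $b_1 x_{22}\equiv b_2 x_{33}\pmod{\m}$; but $Y_{23}$ forces $x_{33}\equiv x_{22}\pmod\m$, and since $x_{22}$ is a unit this yields $b_1=b_2$ on the nose. No rescaling ambiguity arises, because the two relevant diagonal entries $x_{22}$ and $x_{33}$ of any intertwiner are automatically congruent modulo~$\m$.
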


\begin{proof} \textbf{Case I.} We claim that the matrices $\alpha_1=E(\ii,0,0,c_1, d_1)$ and $\alpha_2=E(\ii,0,0,c_2,d_2)$ in $\M_3(A _\ii)$, with $c_1-c_2\equiv d_1-d_2 \equiv 0 \pmod {\m^{\ii-1}}$ are $\GL_3(A _\ii)$-conjugate if and only if $d_1=d_2$ and $c_1=c_2$. Indeed, in this case $Y_{11}=(d_1-d_2) x_{11}$, and since $x_{11}$ is a unit, $d_1-d_2=0$. The equality of the $c_j$'s now follows from $c_j=\tr(\alpha_j)$.


\textbf{Case II.} We first claim that we may assume that $\epsilon=1$. This follows from the following
\begin{lemma}
If $v(b) \le \min\{m,v(a)\}$ then $E(m,a,b,c,d) \sim E(v(b),a\pi^{m-v(b)},b,c,d)$.
\end{lemma}
\begin{proof} The matrix
\[
X=\left[\begin{matrix} 1 &  -ec  &  e  \\
ea\pi^{-v(b)} & 1 & 0 \\ 0  &  ec & 1 \end{matrix}\right],
\]
where $e=b^{-1}(\pi^m-\pi^{v(b)})$, realizes the similarity. Note that although $b$ is not invertible, $e$ and $a\pi^{-v(b)}$ are well defined as $v(b) \le v(a),m$.
\end{proof}
We now claim that $\alpha=E(\ii-1,a\pi^{\ii-1},b\pi^{\ii-1},c,d)$ with $b \in \kk^{\times}$ is similar to a matrix of the form $\alpha'=E(\ii-1,0,b\pi^{\ii-1},c',d')$, i.e. that $a$ can be eliminated. Indeed, one checks that conjugating $\alpha$ with
\[
X_e=\left[\begin{matrix} 1 &  0  &  0  \\
-e & 1 & 0 \\ e^2\pi^{\ii-1}  &  -2e\pi^{\ii-1} & 1 \end{matrix}\right], \qquad e \in A_{\ii},
\]
gives
\[
X_e\alpha X_e^{-1}=E(\ii-1,(a-eb)\pi^{\ii-1},b\pi^{\ii-1},c-3e\pi^{\ii-1}, e\pi^{\ii-1}),
\]
and since $b$ is invertible, there exist a choice of $e$ such that $a-be \equiv 0 \pmod {\m}$. Summarizing the last two steps, we may assume that a matrix of type II can be conjugated to a matrix of the form  $E(\ii-1,0,b\pi^{\ii-1},c,d)$ with $b \in \kk^{\times}$. We check that two such matrices are similar if and only if they have the same $b$, $c$ and $d$. If $\alpha_1=E(\ii-1,0,b_1\pi^{\ii-1},c_1,d_1)$ and $\alpha_2=E(\ii-1,0,b_2\pi^{\ii-1},c_2,d_2)$ are similar and $(c_1-c_2) \equiv (d_1-d_2) \equiv (b_1-b_2) \equiv 0 \pmod {\m^{\ii-1}}$, then
\[
\begin{split}
&Y_{32}=0 ~\Longrightarrow~  b_1=b_2 \\
&Y_{31}=0 ~\Longrightarrow~ x_{21} \equiv 0 \pmod \m\\
&Y_{11}=0  ~\Longrightarrow~ d_1=d_2\\
&\tr(\alpha_1)=\tr(\alpha_2) ~\Longrightarrow~ c_1=c_2.
\end{split}
\]


\textbf{Case $\textbf{III}_0$.} First, observe that for any matrix $\alpha=E(\ii,a\pi^{\ii-1},0,c,d)$ of type $\mathrm{III}_0$ we may assume that $a=1$, since we know that $a$ is invertible, and by conjugating $\alpha$ with $D(a,1,1)$ we get $E(\ii,\pi^{\ii-1},0,c,d)$. Second, observe that the matrix
\[
 X=\left[\begin{matrix} 1 &  d^2\pi^{\ii-1}+dc  &  -d  \\
0 & 1 & 0 \\ 0  &  d\pi^{\ii-1} & 1 \end{matrix}\right],
\]
conjugates $E(\ii,\pi^{\ii-1},0,c,d\pi^{\ii-1})$ to $E(\ii,\pi^{\ii-1},0,c+3d\pi^{\ii-1},0)$. It follows that any matrix of type $\mathrm{III}_0$ is equivalent to a matrix of the form $E(\ii,\pi^{\ii-1},0,c,d_0)$, where $d_0 \in A_{\ii}$ is a fixed element which lies above $\bar{d}$, and $c$ varies among the lifts of $\bar{c}$. Since $\tr(\alpha)=c+3d_0$, all these elements are distinct and the assertion is proved.


\textbf{Case $\textbf{III}_1$.} We claim that $\alpha_1=E(\ii-1,\pi^{\ii-1}a_1,0,c_1,d_1)$ and $\alpha_2=E(\ii-1,\pi^{\ii-1}a_2,0,c_2,d_2)$, with $c_1 - c_2 \equiv d_1-d_2 \pmod {\m^{\ii-1}}$ are $\GL_3(A _\ii)$-conjugate if and only if their traces are equal and $a_1=a_2$.
To get the only if part, we use equation $Y_{12}=0$ to deduce that $x_{11} \equiv x_{22} \pmod \m$, and substituting the latter equality into $Y_{31}=0$ gives that $a_1=a_2$. The equality of the traces is of course a necessary condition as well. To prove the if part, assuming equality of traces $c_2=c_1+3\pi^{\ii-1}(d_1-d_2)$ and that $a_1=a_2$, the matrix
\[
X=\left [ \begin{matrix} 1 & 0  & 0 \\ -\delta  & 1 & 0 \\ \pi ^{\ii-1}\delta^2 & -2\pi ^{\ii-1}\delta  & 1 \end{matrix} \right ],
\]
where $\delta=d_1-d_2$, is an invertible solution to the equation $\alpha_1X=X\alpha_2$.

\end{proof}

\subsection{The branching rules}

Classes of types I, II and III branch in the following way when increasing the level
\begin{equation}\label{branchingJ}
 \begin{matrix}\xymatrix{ \underline{\mathrm{level} \quad \ii}\qquad  &  & & \mathrm{I}\ar@{-}[d]\ar@{-}[dll] \ar@{-}[dl] \ar@{-}[dr]& & \mathrm{II}\ar@{-}[d]  &  \mathrm{III}_0\ar@{-}[d] & \mathrm{III}_1\ar@{-}[d] \\
  \underline{\mathrm{level} \quad \ii+1}  &  \mathrm{I} & \mathrm{II}   &  \mathrm{III}_0 & \mathrm{III}_1   & \mathrm{II}   &  \mathrm{III}_0 & \mathrm{III}_1.
  }\end{matrix}
  \end{equation}
This follows from the fact that the relation between $v(b)$ and $\min\{v(a),m\}$ remains unchanged when the level increases for types II and III. Moreover, from Claim \ref{centralizer-types} it follows that types II and III lift in a \lq regular\rq~  fashion. Namely, for each similarity class
$C$ in $\M_3(A_\ii)$, the set of similarity classes in $\M_3(A_{\ii+1})$ that
lie over $C$ is in bijection with $\kk^3$, which is the same behavior as of cyclic elements.

\section{Enumeration of similarity classes} We now specialize to the case where $\kk$ is a finite field with $q$ elements, and count the number of similarity classes in $\M_3(A_{\ii})$ and in $\GL_3(A_{\ii})$ for all $\ii \in \mathbb{N}$. Given a matrix $\alpha \in \M_3(A_\ii)$, recall that $\jj$ is the maximal integer such that $\alpha$ is scalar modulo $\m ^\jj$. Then modulo $\m ^{\jj+1}$, $\alpha$ is conjugate to a matrix $dI+\pi ^\jj \beta$ where modulo $\m$, $\beta$ is either cyclic, $D(a,b,b)$ with $a \ne b$ or equal to $J(0,e)$ . If $\beta \equiv D(a,b,b) \pmod \m$ then it is conjugate to a matrix in one of the following forms
\[
\left [ \begin{matrix} a & 0 & 0 \\ 0 & b & 0 \\ 0 & 0 & b \end{matrix} \right ] \qquad \text{or} \qquad  \left [ \begin{matrix} a & 0 & 0 \\ 0 & b & 0 \\ 0 & 0 & b \end{matrix} \right ] + \pi ^m \left [ \begin{matrix} 0 & 0 \\ 0 & C \end{matrix} \right ]
\]
where $C \in \M_2(A_{\ii-m})$ is cyclic and $\ii>m>\jj$. If $\beta$ equals $J(0,e)$ modulo $\m$ then it is of type I above or it lies over types $\text{II}$, $\text{III}_0$, $ \text{III}_1$ above. We divide the set of matrices to the following classes:
\begin{enumerate}
\item[(i)] $dI$.
\item[(ii)] $dI + \pi ^\jj D(a,b,b)$, $(a\neq b)$.
\item[(iii)] $dI +\pi ^\jj J(c,d)$.
\item[(iv)] The rest of the options.
\end{enumerate}

Before giving the precise analysis, observe right away that these possibilities are partially ordered: $\mathrm{(i)} \prec  \mathrm{(ii)}, \mathrm{(iii)},\mathrm{(iv)}$, $\mathrm{(ii)} \prec  \mathrm{(iv)}$, and $\mathrm{(iii)} \prec \mathrm{(iv)}$, in the sense that each possibility may branch only to possibilities greater or equal to it. Indeed,

\begin{itemize}
\item A similarity class of type (i) in level $\ii$ splits into all the other similarity classes in level $\ii+1$, these are precisely the similarity classes in $\M_3(\kk)$: there are $q$ similarity classes of type (i), $q^2-q$ similarity classes of type (ii), $q$ similarity classes of type (iii) (all are of the form $dI+\pi ^\ii J(0,e)$) and $q^3$ similarity classes of type (iv) (all of the form $dI+\pi ^\ii \beta$ where $\beta$ is cyclic).

\item A similarity class of type (ii) splits into similarity classes of type (ii) and (iv) only: there are $q^2$ similarity classes of type (ii) over it and $q^3$ similarity classes of type (iv) over it, all are of the form
\[
\left [ \begin{matrix} a & 0 & 0 \\ 0 & b & 0 \\ 0 & 0 & b \end{matrix} \right ] + \pi ^\ii \left [ \begin{matrix} 0 & 0 \\ 0 & C \end{matrix} \right ]
\]
with $C$ cyclic in $\M_2(\kk)$.
\item For type (iii) we already computed: there are $q^2$ classes of type (iii) and $(q^3-q^2)+q+q^2 = q^3+q$ classes of type (iv) above it.

    \item Although type (iv) contains various different subtypes, every conjugacy class of type (iv), splits into $q^3$ similarity classes of type (iv) in level $\ii+1$.

\end{itemize}
Altogether, the numbers $\eta^{\mathrm{i}}_\ii$, $\eta^{\mathrm{ii}}_\ii$ $\eta^{\mathrm{iii}}_\ii $ and $\eta^{\mathrm{iv}}_\ii$ of similarity classes of type (i),(ii),(iii) and (iv), respectively, in level $\ii$ satisfy the following recursion:
\[
\left [ \begin{matrix} \eta^{\mathrm{i}}_{\ii+1} \\ \eta^{\mathrm{ii}}_{\ii+1} \\ \eta^{\mathrm{iii}}_{\ii+1} \\ \eta^{\mathrm{iv}}_{\ii+1}\end{matrix} \right ] = T \left[\begin{matrix} \eta^{\mathrm{i}}_\ii \\ \eta^{\mathrm{ii}}_\ii \\ \eta^{\mathrm{iii}}_\ii \\ \eta^{\mathrm{iv}}_\ii \end{matrix} \right ]= \left [ \begin{matrix} q & 0 & 0 & 0 \\ q^2-q & q^2 & 0 & 0 \\ q & 0 & q^2 & 0 \\ q^3 & q^3 & q^3+q & q^3 \end{matrix} \right ]  \left[\begin{matrix} \eta^{\mathrm{i}}_\ii \\ \eta^{\mathrm{ii}}_\ii \\ \eta^{\mathrm{iii}}_\ii \\ \eta^{\mathrm{iv}}_\ii \end{matrix} \right ]
\]
with the initial condition
\[
v_{\M_3}=\left[\begin{matrix} \eta^{\mathrm{i}}_1 \\ \eta^{\mathrm{ii}}_1 \\ \eta^{\mathrm{iii}}_1 \\ \eta^{\mathrm{iv}}_1 \end{matrix} \right ]=\left [ \begin{matrix} q \\ q^2-q \\ q \\ q^3 \end{matrix} \right ] \qquad \text{and} \qquad v_{\GL_3}=\left[\begin{matrix} \eta^{\mathrm{i}}_1 \\ \eta^{\mathrm{ii}}_1 \\ \eta^{\mathrm{iii}}_1 \\ \eta^{\mathrm{iv}}_1 \end{matrix} \right ]=\left [ \begin{matrix} q-1 \\ (q-1)(q-2) \\ q-1 \\ q^3-q^2 \end{matrix} \right ]
\]
Thus, if $\epsilon=(1,1,1,1)$, we have
\begin{equation}\label{s}
\begin{split}
s_{\M_3}(\ii)&=|\text{Sim}(\M_3(A_\ii))|=\epsilon T^{\ii-1} v_{\M_3}, \\
s_{\GL_3}(\ii)&=|\text{Sim}(\GL_3(A_\ii))|=\epsilon T^{\ii-1} v_{\GL_3}.
\end{split}
\end{equation}
A straightforward induction on $\ii$ gives the following formula for $T^{\ii}$.
\begin{claim}\label{T}
\[
T^{\ii}= \left [ \begin{matrix} q^\ii & 0 & 0 & 0 \\ q^{2\ii}-q^\ii & q^{2\ii} & 0 & 0 \\ q^{\ii}\frac{q^\ii-1}{q-1} & 0 & q^{2\ii} & 0 \\ \theta_\ii & q^{2\ii+1}\frac{q^\ii-1}{q-1}  & q^{2\ii-1}(q^2+1)\frac{q^\ii-1}{q-1}  & q^{3\ii} \end{matrix} \right ]
\]
where $\theta_\ii$ is given by
\[
\theta_\ii=q^{\ii-1}\left(\frac{q^\ii-1}{q-1}\right)\left[\left(\frac{q^4+1}{q-1}\right)\left(\frac{q^{\ii}+1}{q+1}\right)-
\left(\frac{q^3+1}{q-1}\right)\right]
\]
\end{claim}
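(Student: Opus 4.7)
The plan is to prove the claim by induction on $\ii$, writing $t_{ij}^{(\ii)}$ for the $(i,j)$-entry of $T^\ii$. The base case $\ii=1$ is an immediate check against the definition of $T$; in particular one verifies that the stated expression for $\theta_1$ collapses to $(q^4 - q^3)/(q-1) = q^3$, matching $T_{41}$. For the inductive step I would compute $T^{\ii+1} = T \cdot T^\ii$ entry by entry and match the result against the proposed closed form for $T^{\ii+1}$. Since $T$ is lower triangular, $T^\ii$ stays lower triangular and its diagonal $(q^\ii, q^{2\ii}, q^{2\ii}, q^{3\ii})$ is immediate from the diagonal of $T$, so only the six below-diagonal entries require work.

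Five of the six entries are easy. The columns of $T^\ii$ can be treated independently because $T$ preserves each flag $\mathrm{span}(e_j, e_{j+1}, \dots, e_4)$, and the entries $(2,1)$, $(3,1)$, $(4,2)$, $(4,3)$ each satisfy a scalar first-order linear recurrence of the form $a_{\ii+1} = \lambda a_\ii + c\mu^\ii$ once the previously verified entries in the same column are substituted. In each case the verification reduces to the elementary identity $q \cdot \tfrac{q^\ii - 1}{q-1} + 1 = \tfrac{q^{\ii+1} - 1}{q-1}$ followed by minor rearrangement; for instance $(4,2)$ gives $t_{42}^{(\ii+1)} = q^3 t_{22}^{(\ii)} + q^3 t_{42}^{(\ii)} = q^{2\ii+3}\bigl(1 + q\tfrac{q^\ii-1}{q-1}\bigr)$, which telescopes to $q^{2(\ii+1)+1}\tfrac{q^{\ii+1}-1}{q-1}$.

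The main obstacle is the bottom-left entry $\theta_\ii$, which receives contributions from all of column $1$ in the recursion. Substituting the already verified closed forms for $t_{11}^{(\ii)}$, $t_{21}^{(\ii)}$, $t_{31}^{(\ii)}$ into $(TT^\ii)_{41}$ yields, after cancelling the $\pm q^{\ii+3}$ terms, the inhomogeneous recurrence
\[
\theta_{\ii+1} - q^3 \theta_\ii \;=\; q^{2\ii+3} \;+\; q^{\ii+1}(q^2+1)\frac{q^\ii-1}{q-1}.
\]
One way to arrive at the stated closed form is to solve this recurrence directly: the right-hand side is a linear combination of $q^{2\ii}$ and $q^\ii$ (both distinct from the characteristic root $q^3$), and the method of undetermined coefficients together with the initial value $\theta_1 = q^3$ pins down a unique solution that can be rearranged into the expression in the claim. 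To verify the claim as stated, it thus suffices to check that the proposed $\theta_\ii$ satisfies both the recurrence and the initial condition; after clearing the denominator $(q-1)^2(q+1)$, this becomes a polynomial identity in $q$ and $q^\ii$, which is tedious but purely mechanical and completes the induction.
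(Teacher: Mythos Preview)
Your proposal is correct and follows exactly the approach the paper itself indicates: the paper simply says ``A straightforward induction on $\ii$ gives the following formula for $T^{\ii}$'' without supplying further details, and your argument is precisely that induction carried out entry by entry, with the only nontrivial step being the verification of the recurrence for $\theta_\ii$. In fact you have written out more of the verification than the paper does.
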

\bigskip
\bigskip

\noindent Combining Claim \ref{T} and \eqref{s} gives
\begin{theorem} The number of similarity classes of matrices in $\M_3(A _\ii)$ is
\[
s_{\M_3}(\ii)=\frac{q^{3\ii+3}+q^{3\ii-1}-q^{2\ii+2}-q^{2\ii+1}-q^{2\ii}-q^{2\ii-1}+2q^\ii}{(q-1)(q^2-1)}.
\]
The number of similarity classes of elements in $\GL_3(A _\ii)$ is
\[
s_{\GL_3}(\ii)=\frac{q^{3\ii+2}-q^{3\ii}+2q^{3\ii-2}-q^{2\ii+1}-q^{2\ii-1}-2q^{2\ii-2}+2q^{\ii-1}}{q^2-1}.
\]
\end{theorem}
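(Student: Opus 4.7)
The theorem follows by direct computation from Claim \ref{T} and the identities \eqref{s}. My plan is, for each of $\M_3$ and $\GL_3$: substitute $\ii-1$ for $\ii$ in the closed form for $T^{\ii}$ provided by Claim \ref{T}, multiply the resulting $4 \times 4$ matrix by the appropriate initial vector $v_{\M_3}$ or $v_{\GL_3}$, and then sum the four entries of the product, this being precisely the action of $\epsilon = (1,1,1,1)$.

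I would first handle $s_{\M_3}(\ii)$. Because $T^{\ii-1}$ is lower triangular, the first three rows of the product $T^{\ii-1} v_{\M_3}$ simplify immediately by elementary geometric-series manipulations, giving $q^{\ii}$, $q^{2\ii} - q^{\ii}$, and $(q^{2\ii} - q^{\ii})/(q-1)$ respectively (with the third following from the identity $q^{\ii}(q^{\ii-1}-1) + q^{2\ii-1}(q-1) = q^{2\ii}-q^{\ii}$). The fourth row is the main computational burden, since it involves $\theta_{\ii-1}$ together with three further terms carrying the factor $(q^{\ii-1}-1)/(q-1)$; expanding $\theta_{\ii-1}$ via its explicit formula, placing everything over the common denominator $(q-1)(q^2-1)$, and collecting coefficients by powers of $q^{\ii}$ produces a numerator of the stated shape. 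Summing all four rows then yields the claimed formula for $s_{\M_3}(\ii)$.

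The computation for $s_{\GL_3}(\ii)$ proceeds identically with $v_{\GL_3}$ in place of $v_{\M_3}$. Alternatively, since $v_{\M_3} - v_{\GL_3} = (1,\, 2(q-1),\, 1,\, q^2)^{\textrm{tr}}$ counts the similarity classes of non-invertible matrices at level $1$, one can obtain $s_{\GL_3}(\ii)$ from $s_{\M_3}(\ii)$ by subtracting $\epsilon T^{\ii-1}(v_{\M_3}-v_{\GL_3})$; either route leads to the same closed-form expression after simplification.

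The only genuine obstacle is bookkeeping: the formula for $\theta_\ii$ is complex enough that one must be careful when combining its contribution with the geometric-series factors in the other entries, and one must verify that the various powers of $q^{\ii}$ and constant-in-$\ii$ terms collect correctly over the denominator $(q-1)(q^2-1)$. Beyond this, the argument is purely algebraic manipulation in $q$ and $q^{\ii}$; no additional ideas are needed beyond Claim \ref{T}.
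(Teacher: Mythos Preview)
Your proposal is correct and follows exactly the route the paper takes: the paper's proof consists of the single phrase ``Combining Claim~\ref{T} and \eqref{s} gives'' the theorem, and your plan is precisely the straightforward matrix-times-vector computation this phrase refers to. Your intermediate values for the first three entries of $T^{\ii-1}v_{\M_3}$ and the difference $v_{\M_3}-v_{\GL_3}=(1,2(q-1),1,q^2)^{\mathrm{tr}}$ are all correct, so only the bookkeeping on the fourth row remains, as you note.
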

These can be packed in terms of generating functions
\[
\begin{split}
\mathcal{Z}_{\M_3}(z)&=\sum_{\ii=0}^{\infty} s_{\M_3}(\ii)z^\ii =\frac{1}{(q-1)(q^2-1)} \left ( \frac{q^3+q^{-1}}{1-q^3z} -\frac{q^2+q+1+q^{-1}}{1-q^2z} +\frac{2}{1-qz} \right ) \\
 \mathcal{Z}_{\GL_3}(z)&=\sum_{\ii=0}^{\infty} s_{\GL_3}(\ii)z^\ii=\frac{1}{q^2-1}\left(\frac{q^2-1+2q^{-2}}{1-q^3z} - \frac{q+q^{-1}+2q^{-2}}{1-q^2z}+\frac{2q^{-1}}{1-qz} \right).
\end{split}
\]

\providecommand{\bysame}{\leavevmode\hbox to3em{\hrulefill}\thinspace}
\providecommand{\MR}{\relax\ifhmode\unskip\space\fi MR }
\providecommand{\MRhref}[2]{%
  \href{http://www.ams.org/mathscinet-getitem?mr=#1}{#2}
}
\providecommand{\href}[2]{#2}

\vspace{\bigskipamount}

\begin{footnotesize}
\begin{quote}

Nir Avni\\
Einstein Institute of Mathematics,\\
The Hebrew University of Jerusalem, Edmond Safra Campus, Givat Ram, \\
 Jerusalem 91904, Israel \\
{\tt  avni.nir@gmail.com} \\

Uri Onn\\
Ben-Gurion university of the Negev, \\
Beer-Sheva 84105, Israel\\
{\tt urionn@math.bgu.ac.il} \\

Amritanshu Prasad\\
The Institute of Mathematical Sciences, CIT campus, \\
Chennai 600 113, India \\
{\tt amri@imsc.res.in} \\

Leonid Vaserstein\\
Department of Mathematics, Penn State University, University Park PA
\\ 16802-6401, USA \\
{\tt vstein@math.psu.edu}

\end{quote}
\end{footnotesize}

\end{document}